\newcommand{\sn}{{\operatorname{sn}}}
\newcommand{\counte}{theorem}
\newtheorem{lemma}[\counte]{\bf Lemma}
\newtheorem{coro}[\counte]{\bf Corollary}
\newtheorem{remark}[\counte]{\bf Remark}
\newtheorem{prop*}{\bf Step} 
\numberwithin{equation}{section}
\renewcommand{\thefootnote}{\fnsymbol{footnote}}
\begin{document}

\renewcommand{\thefootnote}{\arabic{footnote}}

\centerline{\bf\Large A Schur's type volume comparison theorem
\footnote{Supported by NSFC 12371050. \hfill{$\,$}}}

\vskip5mm

\centerline{Xiaole Su, Yi Tan, Yusheng Wang\footnote{The
corresponding author (E-mail: wyusheng@bnu.edu.cn). \hfill{$\,$}}}

\vskip6mm

\noindent{\bf Abstract.} In this paper, inspired by Schur's comparison theorem  about curves in Euclidean space,
we mainly provide a Schur's type volume comparison theorem, which is about the volumes of the boundaries of open balls in a complete $n$-dimensional Riemannian manifold with Ricci$\geq (n-1)k$.

\vskip1mm

\noindent{\bf Key words.}  Volume comparison theorem, Schur's theorem, concave function, Dini derivative.

\vskip1mm

\noindent{\bf Mathematics Subject Classification (2020)}: 53C20, 26A09, 26A51.

\vskip6mm

\setcounter{section}{0}


\section{Main results}

In differential geometry, a basic and interesting result is Schur's comparison theorem ([Ho]) which asserts that: {\it Given two arc-length parameterized $C^2$-curves $\gamma_i(s)|_{[0,l]}$ ($i=1,2$) in Euclidean space $\Bbb R^3$, if $\gamma_1(s)|_{[0,l]}$ together with the chord $[\gamma_1(0)\gamma_1(l)]$ forms a convex and simple closed curve
and the curvature $|\gamma_1''(s)|\geq|\gamma_2''(s)|$ for all $s\in [0,l]$, then the distance
$$|\gamma_1(0)\gamma_1(l)|\leq|\gamma_2(0)\gamma_2(l)|;$$
and if the equality holds, then the two curves are congruent. }

Another significant result is the following relative volume comparison theorem (Bishop-Gromov),
where we denote by $\Bbb S_k^n$ the simply connected and complete $n$-dimensional space form of constant curvature $k$.

\vskip2mm

\noindent {\bf Theorem 1.1 ([Pe]).} {\it Let $M$ be a complete $n$-dimensional Riemannian manifold with ${\rm Ric}_M\geq (n-1)k$. Then for any $p\in M$ and $\tilde p\in \Bbb S_k^n$,
$$ \frac{\text{\rm Vol}(B(p, \rho))}{\text{\rm Vol}(B(\tilde p, \rho))}, \text{ whose limit is equal to $1$ as $\rho\to 0^+$, is decreasing with respect to $\rho$}.\eqno{(1.1)}$$}
\hskip4mm As a result of Theorem 1.1,  the volume {\rm Vol}$(B(p, \rho))\leq${\rm Vol}$(B(\tilde p, \rho))$;
moreover, we can show that if the equality holds then the open ball $B(p, \rho)$ is isometric to $B(\tilde p, \rho)$
(see Footnote 8 below). This is just the well known Bishop's volume comparison theorem ([Pe], [WSY]).

Note that Schur's comparison theorem just compares the ``sizes'' of the boundaries of two curves of the same length. Inspired by it, a natural question is whether we can compare
the ``sizes'' of the boundaries of $B(p, \bar r)$ and $B(\tilde p, r)$ when Vol$(B(p, \bar r))=$ Vol$(B(\tilde p, r))$ in Bishop's volume comparison theorem.
Our main result is just to answer the question.

\vskip2mm

\noindent {\bf Theorem A.} {\it Let $M$ be a complete $n$-dimensional Riemannian manifold with ${\rm Ric}_M\geq (n-1)k$. Given $p\in M$ and $\tilde p\in \Bbb S_k^n$, if {\rm Vol}$(B(p, \bar r))=${\rm Vol}$(B(\tilde p, r))$
with $r<\frac\pi{\sqrt{k}}$ for $k>0$, then
$$\text{{\rm Vol}$(\partial B(p, \bar r))\leq ${\rm Vol}$(\partial B(\tilde p, r))$ and {\rm Vol}$'_+(\partial B(p, \rho))|_{\rho=\bar r}\leq${\rm Vol}$'(\partial B(\tilde p, \rho))|_{\rho=r}$};$$
and if any of the two equalities holds, then $\bar r=r$. Moreover, if {\rm Vol}$(\partial B(p,\bar r))>0$, then $\bar r$ as a function with respect to $r$ is continuous, and $\bar r_+'\geq1$ at $r$ and the equality implies $\bar r=r$.}

\vskip2mm

For the definition of {\rm Vol}$(\partial B(p,\rho))$ refer to Section 3 below. In any rigidity case where $\bar r=r$ in Theorem A,  $B(p, r)$ is isometric to $B(\tilde p, r)$ by Bishop's volume comparison theorem.
And if $k>0$ and $r=\frac\pi{\sqrt{k}}$ in Theorem A, then $M$ has to be isometric to  $\Bbb S_k^n$ by the Maximal Volume Theorem ([Pe]).

In Theorem A, it is clear that $\bar r\geq r$ by Bishop's volume comparison theorem. We would like to point out that we can also compare
the ``sizes'' of the boundaries of $B(p, r)$ and $B(\tilde p, r)$. From the proof of Theorem 1.1, we can in fact conclude that
$\text{Vol$(\partial B(p, r))\leq $Vol$(\partial B(\tilde p, r))$}$, and $\text{Vol$_+'(\partial B(p, r))\leq$Vol$'(\partial B(\tilde p, r))$}$
(see Footnotes 6 and 7 below).

Note that ${\rm Vol}(\partial B(\tilde p, r))$ in Theorem A with $r\leq\frac{\pi}{2\sqrt k}$ for $k>0$
is increasing with respect to $r$, then it is easy to see that:

\vskip2mm

\noindent {\bf Corollary B.} {\it Let $M$ be a complete $n$-dimensional Riemannian manifold with ${\rm Ric}_M\geq (n-1)k$. Given $p\in M$ and $\tilde p\in \Bbb S_k^n$, if {\rm Vol}$(\partial B(p, \bar r)=${\rm Vol}$(\partial B(\tilde p, r))$ with $r\leq\frac{\pi}{2\sqrt k}$ for $k>0$, then {\rm Vol}$(B(p, \bar r))\geq${\rm Vol}$(B(\tilde p, r))$; and if the equality holds, then $\bar r=r$ and thus
$B(p, r)$ is isometric to $B(\tilde p, r)$.}

\vskip2mm

Moreover, similar to the monotonicity in (1.1), we have the following observation.

\vskip2mm

\noindent {\bf Theorem C.} {\it For the $p$ and $\tilde p$ in Theorem A, if {\rm Vol}$(B(p, \bar r))=${\rm Vol}$(B(\tilde p, r))$, then
$$ \text{ both  $\frac{r}{\bar r}$ and }  \frac{\text{\rm Vol}(\partial B(p, \bar r))}{\text{\rm Vol}(\partial B(\tilde p, r))} \text{ are decreasing with respect to $r$} \eqno{(1.2)}$$
in any of the following two cases:

{\rm (C1)}\ \ $n=2$, $k\geq 0$, and $\bar r$ is less than the injective radius of $M$ at $p$;

{\rm (C2)}\ \ $M$ is isometric to $\Bbb S^n_{\bar k}$ with $\bar k\geq k$.}

\vskip2mm

In other cases, we cannot see the monotonicity in (1.2) so far.

\vskip2mm

In the rest of the paper, we will first introduce two lemmas on real functions  in Section 2 used in the proofs of Theorems A and C.
Then we will give proofs of Theorems A and C in Section 3. At last, as an application of Lemma 2.2, we will give a relative version of the classical Toponogov's Theorem.


\section{Two lemmas on real functions}

It turns out that Theorems A and C can be reduced to the following properties of real functions.
We will denote by $\text{\rm sn}_k(t)$ the functions $\frac{1}{\sqrt k}\sin(\sqrt{k}t)$, $t$,  $\frac{1}{\sqrt{-k}}\sinh(\sqrt{-k}t)$
for $k>0,\ =0,\ <0$ respectively.

\begin{lemma} Let $f: [0,l]\to \mathbb{R}$ be a non-negative and Lebesgue integrable function with $f(0)=0$ and $D^+f(0)=1$.
Suppose that $\frac{f(t)}{\text{\rm sn}_k(t)}$ is decreasing in $(0,l]$ with $l<\frac{\pi}{\sqrt{k}}$ for $k>0$.
Given an integer $m>0$, if $\int_0^{x}f^m(t)dt=\int_0^{r}{\text{\rm sn}_k^m(t)}dt$ for some $x,r\in[0,l]$, then the followings hold:

\vskip1mm

{\rm(2.1.1)} $f(t)\leq\text{\rm sn}_k(t)$ on $[0,l]$, and so $x\geq r$ and the equality implies $f(t)={\text{\rm sn}_k(t)}$ on $[0,r]$. And if $f(t_0)>0$,
then we can define a continuous function $x(r)$ by $r\mapsto x\in [0,t_0]$.

\vskip1mm

{\rm(2.1.2)} $f(x)\leq\text{\rm sn}_k(r)$, and the equality implies $x=r$ and so $f(t)={\text{\rm sn}_k(t)}$ on $[0,r]$.

\vskip1mm

{\rm(2.1.3)} $D^{\pm}f(x)\leq\text{\rm sn}'_k(r)$, and the equality implies $x=r$ and so $f(t)={\text{\rm sn}_k(t)}$ on $[0,r]$.

\vskip1mm

{\rm(2.1.4)} If in addition $f$ is right continuous, then the function $x(r)$ in (2.1.1) satisfies $x_{+}'(r)\geq 1$;
and if $x_{+}'(r_0)=1$, then $x(r_0)=r_0$ and so $f(t)={\text{\rm sn}_k(t)}$ on $[0,r_0]$.
\end{lemma}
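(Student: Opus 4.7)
The plan is to prove the four parts in sequence, since (2.1.2) is the technical heart and (2.1.3)--(2.1.4) follow quickly once it is in hand. For (2.1.1) I set $\phi(t):=f(t)/\sn_k(t)$. This ratio is monotone decreasing on $(0,l]$, hence has a one-sided limit at $0$; combined with $\sn_k(t)/t\to 1$ and $\limsup_{h\to 0^+}f(h)/h=D^+f(0)=1$, this forces $\phi(0^+)=1$, so $\phi\le 1$ throughout and $f\le\sn_k$ pointwise. Raising to the $m$-th power and integrating against $\int_0^x f^m=\int_0^r\sn_k^m$ gives $x\ge r$; in the equality case the non-negative integrand $\sn_k^m-f^m$ vanishes a.e.\ on $[0,r]$, and monotonicity of $\phi$ upgrades this to $f=\sn_k$ on $[0,r]$. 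For the continuity of $x(r)$: if $f(t_0)>0$ then the monotonicity of $\phi$ yields $f>0$ on $(0,t_0]$, so $F(x):=\int_0^x f^m$ is strictly increasing on $[0,t_0]$; inverting gives $x(r)$ continuous on the corresponding range.

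The substantive work is in (2.1.2). From $\phi$ decreasing, $\phi(t)\ge\phi(x)$ for $t\le x$, so $\int_0^r\sn_k^m=\int_0^x f^m\ge\phi(x)^m\int_0^x\sn_k^m$; multiplying by $\sn_k(x)^m$ shows that $f(x)\le\sn_k(r)$ reduces to the non-decreasingness of
$$H(s):=\frac{\int_0^s\sn_k^m(t)\,dt}{\sn_k^m(s)}$$
on $(0,l]$. A direct computation gives $H'(s)=I(s)/\sn_k^{m+1}(s)$ where
$$I(s):=\sn_k^{m+1}(s)-m\sn_k'(s)\int_0^s\sn_k^m(t)\,dt,\qquad I(0)=0,\quad I'(s)=\sn_k^m(s)\sn_k'(s)+mk\sn_k(s)\int_0^s\sn_k^m,$$
so the task boils down to $I\ge 0$. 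This is the main obstacle: the two terms in $I'$ are both non-negative when $k\ge 0$ and $s\le\pi/(2\sqrt k)$, but when $k>0$ and $s>\pi/(2\sqrt k)$ (so $\sn_k'<0$), or when $k<0$ (so $mk\sn_k J<0$), one term is negative and a more careful argument is needed. I would use the alternative form $I(s)=\int_0^s\sn_k^m(t)[(m+1)\sn_k'(t)-m\sn_k'(s)]\,dt$ (obtained from $\sn_k^{m+1}(s)=(m+1)\int_0^s\sn_k^m\sn_k'$) together with the concavity $\sn_k''=-k\sn_k\le 0$ on $[0,\pi/\sqrt k]$ for $k>0$ (and a direct computation exploiting $(\sn_k')^2-k\sn_k^2=1$ for $k<0$) to confirm $I\ge 0$. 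Once this is settled, equality $f(x)=\sn_k(r)$ forces equality in all the above estimates, making $\phi$ constant on $(0,x]$; combined with $\phi(0^+)=1$ this gives $f=\sn_k$ on $[0,x]$, and the integral equation then forces $x=r$.

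Items (2.1.3) and (2.1.4) then drop out. For (2.1.3), for $t>x$ the inequality $f(t)\le\phi(x)\sn_k(t)$ yields $(f(t)-f(x))/(t-x)\le\phi(x)\bigl(\sn_k(t)-\sn_k(x)\bigr)/(t-x)$; taking $\limsup$ as $t\to x^+$ gives $D^+f(x)\le\phi(x)\sn_k'(x)$, and the symmetric estimate from the left bounds $D^-f(x)$. Combining with (2.1.2) and the monotonicity of $\beta_k:=\sn_k'/\sn_k$, which is strictly decreasing as a consequence of the Riccati equation $\beta_k'=-k-\beta_k^2$, yields
$$D^\pm f(x)\le\phi(x)\sn_k'(x)\le\frac{\sn_k(r)}{\sn_k(x)}\sn_k'(x)=\sn_k(r)\beta_k(x)\le\sn_k(r)\beta_k(r)=\sn_k'(r),$$
with equality anywhere in the chain forcing $x=r$ and $f=\sn_k$ on $[0,r]$ via (2.1.2). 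For (2.1.4), right continuity of $f$ at $x(r)$ and continuity of $\sn_k^m$ allow differentiation of the implicit relation $\int_{x(r)}^{x(r+h)}f^m=\int_r^{r+h}\sn_k^m$: as $h\to 0^+$ this reads $(x(r+h)-x(r))(f^m(x(r))+o(1))=h(\sn_k^m(r)+o(1))$, giving $x_+'(r)=\sn_k^m(r)/f^m(x(r))\ge 1$ by (2.1.2); the equality case at $r_0$ forces $f(x(r_0))=\sn_k(r_0)$, and (2.1.2) then yields $x(r_0)=r_0$ and $f=\sn_k$ on $[0,r_0]$.
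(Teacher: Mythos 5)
Your items (2.1.1), (2.1.3) and (2.1.4) follow essentially the same route as the paper: (2.1.1) by monotonicity of $\phi=f/\sn_k$, (2.1.3) by differentiating $\phi$ and feeding in (2.1.2), and (2.1.4) by differentiating the implicit relation to get $x'_+(r)=\sn_k^m(r)/f^m(x(r))$. Where you genuinely diverge is (2.1.2). The paper introduces the auxiliary point $\bar x$ with $f(\bar x)=\sn_k(r)$, writes $f(\bar x)=a\,\sn_k(\bar x)$, and compares the graphs of $\sn_k^m|_{[0,r]}$ and $a^m\sn_k^m|_{[0,\bar x]}$ through their inverse functions via the pointwise derivative inequality (2.4), with an extra reflection for $k>0$, $\bar x>\pi/(2\sqrt k)$. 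You instead observe that $\phi(t)\ge\phi(x)$ on $(0,x]$ gives $\phi(x)^m\int_0^x\sn_k^m\le\int_0^r\sn_k^m$, and that the desired $f(x)\le\sn_k(r)$ then reduces, via $x\ge r$, to the one\emph{-}variable monotonicity of $H(s)=\int_0^s\sn_k^m\big/\sn_k^m(s)$. This is a cleaner reduction than the paper's two-point inverse-function argument, and the equality analysis drops out of it just as cleanly. However, your verification that $H'\ge 0$ is only sketched and has two defects: the Pythagoras-type identity you quote should be $(\sn_k')^2+k\sn_k^2=1$ (not $-$), and the form $I(s)=\int_0^s\sn_k^m(t)[(m+1)\sn_k'(t)-m\sn_k'(s)]\,dt$ does not make the integrand visibly non-negative when $k<0$ (where $\sn_k'$ is \emph{increasing}, so $\sn_k'(t)\le\sn_k'(s)$). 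A clean way to close this is to compute, using $(\sn_k')^2+k\sn_k^2=1$ and $\sn_k''=-k\sn_k$, that for $\sn_k'(s)>0$
\[
\frac{d}{ds}\!\left(\frac{\sn_k^{m+1}(s)}{m\,\sn_k'(s)}\right)=\sn_k^m(s)+\frac{\sn_k^m(s)}{m(\sn_k'(s))^2}\ \ge\ \sn_k^m(s),
\]
which upon integration from $0$ gives $I(s)\ge 0$; when $k>0$ and $\sn_k'(s)\le0$, $I(s)>0$ is immediate since both terms of $I$ are then non-negative. Finally, one caution that applies to both your write-up and the paper's: in (2.1.3), the step from $\phi(x)\le\sn_k(r)/\sn_k(x)$ to $\phi(x)\sn_k'(x)\le(\sn_k(r)/\sn_k(x))\sn_k'(x)$ reverses when $\sn_k'(x)<0$ (i.e.\ $k>0$, $x>\pi/(2\sqrt k)$); your chain, like the paper's, does not treat this case separately, so (2.1.3) is only justified for $r$ (hence $x$) in the range where $\sn_k'\ge0$.
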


Recall that, for a function $f: [a,b]\to \mathbb{R}$ (maybe not continuous), its upper and lower right Dini derivatives (cf. [KK]) at $x\in[a,b]$ are defined by
\begin{align*}
 D^{+}  f(x)\triangleq\varlimsup_{h \to 0^+} \frac{f(x+h)-f(x)}{h} \text{\ \ and\ \ }
 D_{+}  f(x)\triangleq\varliminf_{h \to 0^+} \frac{f(x+h)-f(x)}{h},
\end{align*}
and $D^{-}f(x), D_{-}f(x)$ are similarly defined as $h \to 0^-$.
Clearly, if $f$ is right differentiable at $x$, then its right Dini derivatives are both equal to $f'_+(x)$.

\begin{proof} (2.1.1)\ \
Since $f(t)\geq 0$ with $f(0)=0$, $D^+f(0)=1$ and $\frac{f(t)}{\text{\rm sn}_k(t)}$ is decreasing on $(0,l]$,
it's clear that $f(t)\leq\text{\rm sn}_k(t)$ on $[0,l]$. Then if $\int_0^{x}f^m(t)dt=\int_0^{r}{\text{\rm sn}_k^m(t)}dt$,
we can get $x\geq r$, and the equality implies $f(t)={\text{\rm sn}_k(t)}$ on $[0,r]$.
Moreover, if $f(t_0)>0$, then the decreasing property of $\frac{f(t)}{\text{\rm sn}_k(t)}$ implies that $f(t)>0$ on $(0, t_0)$ and we can define a continuous function $x(r)$ by $r\mapsto x\in [0,t_0]$.

\vskip1mm

(2.1.2)\ \  Note that we just need to consider $r\in(0,\frac{\pi}{2\sqrt{k}})$ if $k>0$ (in fact, by (2.1.1), $x\geq r$ and $f(x)\leq\text{\rm sn}_k(x)$; so if $r\geq\frac{\pi}{2\sqrt{k}}$, then $f(x)\leq\text{\rm sn}_k(r)$ and the equality implies $x=r$).
(2.1.2) is obviously true if there is no $t\in [r,l]$ such that $f(t)\geq\text{\rm sn}_k(r)$;
otherwise, by the decreasing property of $\frac{f(t)}{\text{\rm sn}_k(t)}$
there is $\bar x\in [r,l]$ such that $f(\bar x)=\text{\rm sn}_k(r)$ and $f(t)<\text{\rm sn}_k(r)$ on $[r,\bar x)$.
Since $f(\bar x)\leq \text{\rm sn}_k(\bar x)$ by (2.1.1), there is an $a\in[0,1]$ such that
$f(\bar{x})=a\text{\rm sn}_k(\bar{x})$. We claim that
$$\int_{0}^{\bar{x}}a^m\text{\rm sn}_k^m(t) \,dt\geq\int_{0}^{r}\text{\rm sn}_k^m(t) \,dt. \eqno{(2.1)}$$
Note that $\frac{f(t)}{\text{\rm sn}_k(t)}\geq\frac{f(\bar x)}{\text{\rm sn}_k(\bar x)}=a$ on $(0,\bar{x}]$, which plus (2.1) implies that
$$\int_{0}^{\bar{x}}f^m(t)\geq \int_{0}^{\bar{x}}a^m\text{\rm sn}_k^m(t) \,dt\geq\int_{0}^{r}\text{\rm sn}_k^m(t) \,dt=\int_{0}^{x}f^m(t) \,dt.\eqno{(2.2)}$$
It then has to hold that $\bar{x}\geq x$; and thus $f(x)\leq\text{\rm sn}_k(r)$ by the choice of $\bar x$ and the equality holds only if
$\bar x=x$. Note that if $\bar x=x$, (2.2) plus `$\frac{f(t)}{\text{\rm sn}_k(t)}\geq a$' implies that $\frac{f(t)}{\text{\rm sn}_k(t)}\equiv a$ on  $(0,x]$; so it follows that $f(t)={\text{\rm sn}_k(t)}$ on $[0,x]$ because $D^+f(0)=1$ and thus $x=r$. This completes the proof of (2.1.2)
once we prove (2.1).

It then just remains to verify (2.1). We first consider the case where $k\leq 0$, or $k>0$ and $\bar x\leq\frac{\pi}{2\sqrt{k}}$.
In this case, for any $t_1\in [0, r]$ there is a unique $t_2\in [t_1, \bar x]$ such that $\text{\rm sn}_k^m(t_1)=a^m\text{\rm sn}_k^m(t_2)$.
By considering the inverse functions of $\text{\rm sn}_k^m(t)|_{[0,r]}$ (note that we have assumed that $r\leq\frac{\pi}{2\sqrt{k}}$ if $k>0$)
and $a^m\text{\rm sn}_k^m(t)|_{[0,\bar{x}]}$
which are both defined on $[0,\text{\rm sn}_k^m(r)]$, in order to see (2.1) we just need to show
$$\bar x-t_2\geq r-t_1.\eqno{(2.3)}$$
It is not hard to see that (2.3) will be true if we can show that
$$(\text{\rm sn}_k^m(t))^{'}|_{t_1}\geq(a^m\text{\rm sn}_k^m(t))^{'}|_{t_2} \text{ for all $t_1\in [0, r]$}.\eqno{(2.4)}$$
In fact, note that $\text{\rm sn}_k(t_1)=a\text{\rm sn}_k(t_2)$, so
\begin{align*} (\text{\rm sn}_k^m(t))^{'}|_{t_1}=& m\text{\rm sn}_k^{m-1}(t_1)\text{\rm sn}_k'(t_1),\\
(a^m\text{\rm sn}_k^m(t))^{'}|_{t_2}=& ma^m\text{\rm sn}_k^{m-1}(t_2)\text{\rm sn}_k'(t_2)=
m\text{\rm sn}_k^{m-1}(t_1)(a\text{\rm sn}_k'(t_2)).
\end{align*}
And by taking into account $a\in[0,1]$, we can check that $\text{\rm sn}_k'(t_1)\geq a\text{\rm sn}_k'(t_2)$ for $k>0, =0, <0$  one by one. Then (2.4) follows. As for the only remaining case where $k>0$ and $\bar{x}>\frac{\pi}{2\sqrt{k}}$,
note that it suffices to show
$$\int_{0}^{\frac{\pi}{\sqrt{k}}-\bar{x}}a^m\cdot\text{\rm sn}_k^m(t) \,dt\geq\int_{0}^{r}\text{\rm sn}_k^m(t) \,dt.\eqno{(2.5)}$$
Note that $a\text{\rm sn}_k(\frac{\pi}{\sqrt{k}}-\bar{x})=a\text{\rm sn}_k(\bar{x})=\text{\rm sn}_k(r)$ which implies $\frac{\pi}{\sqrt{k}}-\bar{x}\geq r$. Then by considering the inverse functions of $\text{\rm sn}_k^m(t)|_{[0,r]}$ and $a^m\text{\rm sn}_k^m(t)|_{[0,\frac{\pi}{\sqrt{k}}-\bar{x}]}$, we can similarly verify (2.5).

\vskip1mm

(2.1.3)\ \
Since $\frac{f(t)}{\text{\rm sn}_k(t)}$ is decreasing on $(0,l]$, it is clear that $D^\pm\left(\frac{f(x)}{\text{\rm sn}_k(x)}\right)\leq 0$, which implies
$$D^\pm f(x)\cdot\text{\rm sn}_k(x)-f(x)\cdot\text{\rm sn}'_k(x)\leq0, \text{ i.e. } D^\pm f(x)\leq\frac{f(x)}{\text{\rm sn}_k(x)}\text{\rm sn}'_k(x).$$
Note that $\frac{\text{\rm sn}'_k(x)}{\text{\rm sn}_k(x)}\leq \frac{\text{\rm sn}'_k(r)}{\text{\rm sn}_k(r)}$ (because $x\geq r$) and  $f(x)\leq\text{\rm sn}_k(r)$ by (2.1.2), then $\frac{f(x)}{\text{\rm sn}_k(x)}\text{\rm sn}'_k(x)\leq\text{\rm sn}'_k(r)$, and the equality implies $x=r$ by (2.1.2). This completes the proof of (2.1.3).

\vskip1mm

(2.1.4)\ \ Since $f$ is right continuous in $[0,l)$, it is easy to see that the function $x(r)$ in (2.1.1)
is right differentiable and satisfies
$$f^m(x(r))x'_+(r)=\text{\rm sn}_k^m(r).\eqno{(2.6)}$$
By (2.1.2), we have that $f(x(r))\leq\text{\rm sn}_k(r)$, so $x'_+(r)\geq1$. And if $x_{+}'(r_0)=1$, i.e. $f(x(r_0))=\text{\rm sn}_k(r_0)$,
then $x(r_0)=r_0$  by (2.1.2) and so $f(t)={\text{\rm sn}_k(t)}$ on $[0,r_0]$.
\end{proof}

\noindent{\bf $\bullet$\ \  A kind of examples satisfying Lemma 2.1}

\vskip2mm

In comparison geometry, many classical results involve continuous functions satisfying $f''(t)+kf(t)\leq 0$ in the support sense \footnote{At $t\in (a,b)$, we say that {\it $f^{\prime \prime}(t) \leq B$ in the support sense} if there is an $A$ such that
$f(t+\tau) \leq f(t)+A \tau+\frac12B \tau^{2}+o\left(\tau^{2}\right)$ (for other versions of the definition refer to \cite{Na} and \cite{Pe}).} ([CE], [Pe], cf. [PP], [Pet], [GP], [HSW]). In the following, we will show that such kind of functions are examples satisfying Lemma 2.1.
Note that $\text{\rm sn}_k(t)$ is just the solution of $f''(t)+kf(t)=0$ with $f(0)=0$ and $f'(0)=1$.

\begin{lemma} Let $f: [0,l]\to \mathbb{R}$ be a continuous function with $f(0)=0$, and let $k$ be a real number such that $l\leq\frac{\pi}{\sqrt k}$ if $k>0$. If
$f''(t)+kf(t)\leq 0$ \footnote{In the support sense, if $f''(t)+kf(t)\geq 0$, then $g(t)\triangleq-f(t)$ satisfies $g''(t)+kg(t)\leq 0$.} in the support sense for all $t\in (0,l)$, then
$$\dfrac{f(t)}{\text{\rm sn}_k(t)}, \text{ whose limit exists as $t\to 0^+$ (i.e. $f'_+(0)$ exists), is decreasing in $(0,l)$}; \eqno{(2.7)}$$
moreover, if $\frac{f(t_1)}{\text{\rm sn}_k(t_1)}=\frac{f(t_2)}{\text{\rm sn}_k(t_2)}$ for some $t_1<t_2$, then $f(t)\equiv f'_+(0){\rm sn}_k(t)$ on $(0,t_2]$.
\end{lemma}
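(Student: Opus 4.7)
The plan is to prove monotonicity of $F(t) := f(t)/\sn_k(t)$ on $(0,l)$ first; the existence of $f'_+(0)$ as the limit of $F$ then follows at once, and the rigidity will be obtained from a Wronskian-type argument.

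For monotonicity, I would fix $0 < t_0 < l$ and set $g(t) := f(t)\sn_k(t_0) - f(t_0)\sn_k(t)$ on $[0,t_0]$. Then $g(0) = g(t_0) = 0$ and, because $\sn_k(t_0)>0$ (using $l\le\pi/\sqrt k$ when $k>0$) together with $\sn_k'' + k\sn_k = 0$, the hypothesis passes to $g''+kg\le 0$ in the support sense on $(0,t_0)$. I would then invoke the standard support-sense maximum principle: since the principal Dirichlet eigenvalue of $-\partial_t^2-k$ on $[0,t_0]$ is non-negative, the boundary conditions $g(0)=g(t_0)=0$ force $g\ge 0$ on $[0,t_0]$. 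This reads $F(t)\ge F(t_0)$ for $t\in(0,t_0)$, and since $t_0$ is arbitrary, $F$ is non-increasing on $(0,l)$. As a non-increasing real-valued function, $F$ has a limit in $\mathbb R\cup\{+\infty\}$ as $t\to 0^+$; because $\sn_k(t)/t\to 1$, this limit equals $\lim_{t\to 0^+}f(t)/t = f'_+(0)$.

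For the rigidity, assume $F(t_1)=F(t_2)=:c$ for some $0<t_1<t_2<l$. The non-increasing property forces $F\equiv c$, hence $f\equiv c\,\sn_k$, on $[t_1,t_2]$. To extend this identity to $(0,t_2]$ I would use the Wronskian $W(t):=f(t)\sn_k'(t)-f'(t)\sn_k(t)$; the support-sense inequality implies $f$ is locally semi-concave, hence locally Lipschitz on $(0,l)$, so $f'$ exists almost everywhere, and the distributional identity $W'=-(f''+kf)\sn_k\ge 0$ makes $W$ non-decreasing. Since $f\equiv c\,\sn_k$ on $[t_1,t_2]$ gives $W\equiv 0$ there, we have $W\le 0$ on $(0,t_1]$, and therefore $F'=-W/\sn_k^2\ge 0$ almost everywhere on $(0,t_1]$. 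Combined with the non-increasing property, $F$ is constant on $(0,t_1]$ equal to $c$. Thus $f\equiv c\,\sn_k$ on $(0,t_2]$, and letting $t\to 0^+$ identifies $c$ with $f'_+(0)$.

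The main obstacle is simply the support-sense framework: the classical maximum principle and the pointwise ODE computations are not immediately available, so both the maximum-principle step in monotonicity and the distributional treatment of the Wronskian in the rigidity part have to be justified with care (for instance via a Calabi-type perturbation for the former, and via the local semi-concavity/Lipschitz estimate for $f$ for the latter). Both techniques are entirely standard in comparison geometry (cf.\ [Pe], [PP]), but together they carry essentially all the technical weight of the proof.
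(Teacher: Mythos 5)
Your proposal is correct in substance, but it takes a genuinely different route from the paper on the monotonicity step, and a technically different (though morally equivalent) route on the rigidity step, so a comparison is worthwhile.

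For the monotonicity of $F(t)=f(t)/\sn_k(t)$, you fix $t_0$, form $g(t)=f(t)\sn_k(t_0)-f(t_0)\sn_k(t)$, observe $g(0)=g(t_0)=0$ and $g''+kg\le 0$ in the support sense, and then appeal to a two--point Dirichlet maximum principle. The paper instead never invokes a boundary--value maximum principle: it works with $h(t)=f'_+(t)\sn_k(t)-f(t)\sn'_k(t)$ (which is exactly $-W$ in your notation), shows $D^\pm h\le 0$ pointwise using (2.8)--(2.12), applies the elementary Lemma~2.3 to conclude $h$ is decreasing from $h(0)=0$, and then reads off $g'_+\le 0$ and a second application of Lemma~2.3 to get the monotonicity of $F$. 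So the paper keeps everything at the level of Dini derivatives and its own self-contained Lemma~2.3, whereas you outsource the work to the support--sense maximum principle (itself a non-trivial Calabi--type argument, as you note) plus the semi-concavity and distributional calculus for $f''$. One small imprecision in your version: you should say the first Dirichlet eigenvalue is \emph{positive}, not merely non-negative; with eigenvalue exactly zero the maximum principle fails ($g=-\sn_k$ on $[0,\pi/\sqrt k]$), and positivity is what you actually have, since $t_0<l\le\pi/\sqrt k$.

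For the rigidity, your Wronskian $W=f\sn_k'-f'\sn_k$ is precisely the paper's $-h$, so here you and the paper are really proving the same monotone quantity is monotone, just in different languages: you treat $W'$ as a non-negative distribution/measure, the paper estimates the Dini derivatives of $h$ directly and invokes Lemma~2.3. Your distributional framing has one genuine advantage: the paper must separately note (via (2.10) and (2.12)) that $D^-f'_+=-\infty$ at a jump of $f'_+$, in order to kill the potentially bad sign of $(f'_+-f'_-)\sn_k'$ when $\sn_k'<0$ (i.e.\ $k>0$ and $t>\pi/(2\sqrt k)$); in the measure picture the jump simply contributes a non-positive Dirac mass to $f''$, and $\sn_k'$ never appears, so that case distinction evaporates. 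On the other hand, to make your argument airtight you still need to (i) justify that the support--sense condition upgrades to the measure inequality $f''+kf\,dt\le 0$, and (ii) note that $F$ is locally Lipschitz (hence absolutely continuous) so that $F'=0$ a.e.\ together with monotonicity really does force $F$ to be constant on $(0,t_1]$ -- the ``combined with the non-increasing property'' phrasing alone is not quite a proof, since a continuous non-increasing function can have vanishing derivative a.e.\ without being constant. These are both standard, but they are exactly the kind of thing the paper avoids by never leaving the pointwise Dini--derivative world.
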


We should point out that Lemma 2.2 (especially (2.7)) for smooth $f$ is known to experts. In the proof of Lemma 2.2, the main work is to show that $\frac{f(t)}{\text{\rm sn}_k(t)}$ satisfies a property that its upper right and left Dini derivatives are not positive.
It should be known that a function is decreasing if and only if it satisfies such a property
(this is almost obviously true for a continuous function, cf. [KK]).
This is crucial to Lemma 2.2 and we cannot find a reference for it, so we will formulate and prove it.

\begin{lemma} \label{lem:2.3}
Let $f: [a,b]\to \mathbb{R}$ be a function (maybe not continuous). Then $f$ is decreasing on $[a,b]$ if and only if $D^+f(x)\leq 0$ and $D^-f(x)\leq 0$ for all $x\in [a,b]$ \footnote{It needs only $D^+f(a)\leq 0$ and $D^-f(b)\leq 0$ at $a$ and $b$.}.
\end{lemma}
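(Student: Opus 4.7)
The forward direction is a direct unpacking of definitions: if $f$ is decreasing on $[a,b]$, then for $h>0$ the quotient $(f(x+h)-f(x))/h$ is $\le 0$, while for $h<0$ both numerator and denominator flip sign to give the same non-positive quotient; taking the appropriate $\limsup$ yields $D^{+}f(x),D^{-}f(x)\le 0$.

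For the converse I plan to argue by contradiction. Suppose some $s<t$ in $[a,b]$ satisfy $f(s)<f(t)$. The key device is a strict linear tilt: set $\epsilon := \frac{f(t)-f(s)}{2(t-s)}>0$ and $g(x):=f(x)-\epsilon x$. Subtracting $\epsilon x$ shifts every Dini derivative by $-\epsilon$, so $D^{+}g(x)\le -\epsilon$ and $D^{-}g(x)\le -\epsilon$ for all $x\in[a,b]$, while $g(t)-g(s)=\tfrac12(f(t)-f(s))>0$. I then examine the threshold point
\[
c\;:=\;\inf\{x\in[s,t]:\,g(x)>g(s)\},
\]
which is well defined since the set contains $t$, and split on the value of $g(c)$.

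If $g(c)>g(s)$, then $c>s$ and every $x\in[s,c)$ satisfies $g(x)\le g(s)<g(c)$; the left difference quotients $(g(x)-g(c))/(x-c)$ are bounded below by $(g(c)-g(s))/(c-x)\to +\infty$, forcing $D^{-}g(c)=+\infty$, contradicting $D^{-}g(c)\le-\epsilon$. If $g(c)<g(s)$, the definition of $c$ as an infimum yields $x_n\searrow c$ with $g(x_n)>g(s)>g(c)$, the right difference quotients blow up, and $D^{+}g(c)=+\infty$, again a contradiction.

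The delicate case, which I expect to be the main obstacle, is $g(c)=g(s)$: here neither side admits an immediate $+\infty$ blow-up, since the gap $g(s)-g(c)$ vanishes. This is precisely where the \emph{strict} negative bound $-\epsilon$ (rather than merely $\le 0$) is essential. From $D^{+}g(c)\le -\epsilon$ I would extract $\delta>0$ with $g(c+h)\le g(c)-\epsilon h/2<g(s)$ for all $0<h<\delta$, whereas the infimum definition of $c$ supplies some $x\in(c,c+\delta)$ with $g(x)>g(s)$, a direct contradiction. For continuous $f$ this case could be dispatched by noting $g(c)=g(s)$ via a limit; but since the lemma permits discontinuities, the tilt-by-$\epsilon x$ plus the strict-decay argument at $c$ is what closes the proof. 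The endpoint footnote is handled by observing that only $D^{+}g(a)$ and $D^{-}g(b)$ are ever invoked in the cases above.
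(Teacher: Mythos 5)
Your proof is correct, but it takes a genuinely different route from the paper's. The paper argues by a nested-interval bisection: starting from an interval with positive difference quotient $\delta$, it repeatedly halves, keeps the half whose slope is still $\ge\delta$, and lands on a point $x_0 = \bigcap [a_i,b_i]$ where the nested quotients force $D^+f(x_0)\ge\delta$ or $D^-f(x_0)\ge\delta$. You instead apply a strict linear tilt $g(x)=f(x)-\epsilon x$ to push every Dini derivative down to $\le -\epsilon$, then examine the hitting point $c=\inf\{x\in[s,t]: g(x)>g(s)\}$ and run a three-way case split on $g(c)$ versus $g(s)$; the tilt is exactly what makes the borderline case $g(c)=g(s)$ collapse, since $D^+g(c)\le-\epsilon<0$ forces $g$ to dip strictly below $g(s)$ on a right neighbourhood of $c$, incompatible with the infimum characterisation. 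The paper's bisection is shorter and more self-contained (no auxiliary function, no case analysis), and it produces the contradictory Dini bound with a single $\delta$ that is visibly the original slope; your tilt-and-threshold argument is longer but fits a familiar template from comparison lemmas (monotonicity from a one-sided derivative bound) and makes the mechanism of failure at the hitting point more geometrically transparent. Both handle the endpoint footnote naturally: the paper's $x_0$ can only be $a$ or $b$ if the corresponding one-sided quotient persists, and your $c$ only ever needs $D^+$ when $c<t$ (so $D^+$ at $a$ suffices) and $D^-$ when $c>s$ (so $D^-$ at $b$ suffices).
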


\begin{proof}
We just need to show the sufficiency. Suppose that $f$ is not decreasing. Then there exist $a_1,b_1\in [a,b]$ with $a_1<b_1$ and $f(a_1)<f(b_1)$, so $\delta\triangleq \frac{f(b_1)-f(a_1)}{b_1-a_1}>0$.
If $f(\frac{a_1+b_1}{2})\leq \frac{f(a_1)+f(b_1)}{2}$, then let $a_2=\frac{a_1+b_1}{2}$ and $b_2=b_1$; otherwise let $a_{2}=a_1$ and $b_{2}=\frac{a_1+b_1}{2}$. Note that $\frac{f(b_2)-f(a_2)}{b_2-a_2}\geq\delta$.
By repeating this process, we can obtain a sequence of intervals  $\{[a_i, b_i]\}_{i=1}^\infty$ with
$[a_{i+1}, b_{i+1}]\subset [a_i, b_i]$ and $b_i-a_i\to 0$ as $i\to \infty$, moreover each $\frac{f(b_i)-f(a_i)}{b_i-a_i}\geq \delta$.
Then consider the $x_0=\bigcap\limits_{i=1}^\infty[a_i,b_i]$. Note that  $\frac{f(b_i)-f(x_0)}{b_i-x_0}\geq\delta$ or $\frac{f(a_i)-f(x_0)}{a_i-x_0}\geq\delta$,
so $D^+f(x_0)\geq\delta$ or $D^-f(x_0)\geq\delta$, a contradiction.
\end{proof}

\begin{remark}\label{rem2.4} {\rm Let $f:[a,b]\to \mathbb{R}$ be a continuous function,
and $f''(t)\leq 0$ in the support sense for all $t\in (a,b)$. It is well known that
$f(t)$ is concave on $[a,b]$, and thus $f'_+(t)$ exists
and is decreasing on $[a,b)$ (ref. [Na], [PP]). By Lemma \ref{lem:2.3},
$$ D^+f'_+(t)\leq 0,\ D^-f'_+(t)\leq 0\eqno{(2.8)}$$
for $t\in [a,b)$ (at $a$, it just needs $ D^+f'_+(a)\leq 0$). Similarly, $f'_-(t)$
exists and is decreasing on $(a,b]$. Moreover, a basic fact (due to the concavity) is that, for all $t\in (a,b)$,
$$f'_-(t)\geq f'_+(t) \text{ and } \lim_{\tau \to t^-}f'_+(\tau)=f'_-(t).\eqno{(2.9)}$$
As a result, if $f'_-(t)>f'_+(t)$ for some $t\in (a,b)$, then
$$ D^-f'_+(t)=-\infty.\eqno{(2.10)}$$}
\end{remark}

\begin{proof} [Proof of Lemma 2.2]\

Put $g(t)\triangleq\frac{f(t)}{\sn_k(t)}$, $t\in (0,l)$, which is a continuous function.
For (2.7), we just need to show that the right derivative $f'_+(0)$ exists
(so $\lim\limits_{t\to 0^+}g(t)=f'_+(0)$ because $f(0)=0$), and that $g'_+(t)\leq 0$ on $(0,l)$.
For the rigidity part, it suffices to show that if $g'_+(t_0)=0$ with $t_0\in(0,l)$,
then $g'_+(t)\equiv 0$ on $(0,t_0]$.

For the purpose, consider a twice differentiable function $F(t)$ with $F''(t)=f(t)$. Note that
$f(t)+kF(t)$ satisfies $(f(t)+kF(t))''\leq 0$ in the support sense on $(0,l)$. Then,
$f'_+(t)$ exists for all $t\in [0,l)$ (see Remark \ref{rem2.4}), and by (2.8)
$$D^+f'_+(t)+kf(t)=D^+(f(t)+kF(t))'_+\leq 0,\quad D^-f'_+(t)+kf(t)=D^-(f(t)+kF(t))'_+\leq 0.\eqno{(2.11)}$$
Moreover, $f'_-(t)$ exists on $(0,l]$; and by the first inequality of (2.9) and (2.10), for all $t\in (0,l)$,
$$f'_-(t)\geq f'_+(t), \text{ and $D^-f'_+(t)=-\infty$ if $f'_-(t)>f'_+(t)$}.\eqno{(2.12)}$$
On the other hand, for all $t\in (0,l)$,
$$g'_+(t)=\frac{f'_+(t)\sn_k(t)-f(t)\sn'_k(t)}{\sn_k^2(t)}.$$
Let $h(t)\triangleq f'_+(t)\sn_k(t)-f(t)\sn'_k(t)$ for $t\in[0,l)$ (maybe not continuous) with $h(0)=0$. By the first inequality of (2.11),
it is easy to see that
$$D^+h(t)=\left(D^+f'_+(t)+kf(t)\right) \sn_k(t) \leq 0.$$
Meantime,
$$D^-h(t)=\left(D^-f'_+(t)+kf(t)\right) \sn_k(t)+\left(f'_+(t)-f'_-(t)\right)\sn'_k(t).$$
Plus (2.12) (it just needs the first part of (2.12) if $\sn'_k(t)\geq0$, i.e. if
$k\leq 0$ or if $k>0$ and $t\leq\frac{\pi}{2\sqrt k}$), the second inequality of (2.11) also guarantees $D^-h(t)\leq 0$.
Then, by Lemma \ref{lem:2.3}, $h(t)$ is decreasing on $[0,l)$; thus, $h(t)\leq h(0)=0$, and so
$g'_+(t)\leq 0$ on $(0,l)$.

We now assume $g'_+(t_0)=0$, i.e. $h(t_0)=0$, for some $t_0\in(0,l)$. Then, $h(t)|_{[0,t_0]}\equiv0$
because $h(t)$ is decreasing on $[0,l)$ with $h(0)=0$. It follows that $g'_+(t)\equiv 0$ on $(0,t_0]$.
\end{proof}

\vskip2mm

\noindent{\bf $\bullet$\ \  Corollaries of Lemma 2.2}

\vskip2mm

\begin{coro}\label{coro2.5} Let $f$ be the function in Lemma 2.2. Then, $f(t)\leq f'_+(0)\text{\rm sn}_k(t)$ on $[0,l]$;
and if $f(t_0)=f'_+(0)\text{\rm sn}_k(t_0)$ for some $t_0\in (0,l]$, then  $f(t)=f'_+(0)\text{\rm sn}_k(t)$
on $[0, t_0]$. Moreover,
$$\psi(t)\triangleq f(t)-f'_+(0)\text{\rm sn}_k(t) \text{ is decreasing on $[0,l]$},\eqno{(2.13)}$$
where it needs $l\leq\frac{\pi}{2\sqrt k}$ if $k>0$; and if $\psi(t_1)=\psi(t_2)$ for some $t_1<t_2$, then $\psi(t)\equiv 0$ on $[0,t_2]$.
\end{coro}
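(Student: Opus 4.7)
The plan is to reduce every claim in Corollary 2.5 to Lemma 2.2, applied first to $f$ itself and then to the auxiliary function $\psi$.

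For the two statements preceding (2.13), I would invoke Lemma 2.2 directly: it tells us that $f(t)/\sn_k(t)$ is decreasing on $(0,l)$ with right-hand limit $f'_+(0)$ as $t\to 0^+$, so $f(t)/\sn_k(t)\le f'_+(0)$, which rearranges to $f(t)\le f'_+(0)\sn_k(t)$ on $[0,l]$. If equality holds at some $t_0\in(0,l]$, then $f(t_0)/\sn_k(t_0)=f'_+(0)=\lim_{t\to 0^+}f(t)/\sn_k(t)$, and the rigidity clause of Lemma 2.2 forces this ratio to be constant on $(0,t_0]$, giving $f(t)=f'_+(0)\sn_k(t)$ on $[0,t_0]$.

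For (2.13) I would apply Lemma 2.2 a second time, now to $\psi$. The hypotheses are easy to verify: $\psi$ is continuous with $\psi(0)=0$, and
\[
\psi''(t)+k\psi(t)=\bigl(f''(t)+kf(t)\bigr)-f'_+(0)\bigl(\sn_k''(t)+k\sn_k(t)\bigr)=f''(t)+kf(t)\le 0
\]
in the support sense, since $\sn_k$ solves $y''+ky=0$. Lemma 2.2 then yields that $\psi(t)/\sn_k(t)$ is decreasing on $(0,l)$ with $0^+$-limit $\psi'_+(0)=f'_+(0)-f'_+(0)=0$, so $\psi(t)/\sn_k(t)\le 0$ and hence $\psi(t)\le 0$ on $(0,l]$. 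To turn this non-positivity into monotonicity of $\psi$ itself I would use the hypothesis $l\le\pi/(2\sqrt k)$ (in the case $k>0$), under which $\sn_k$ is positive and non-decreasing on $(0,l]$. For $0<s<t\le l$ one then has
\[
\psi(t)=\frac{\psi(t)}{\sn_k(t)}\sn_k(t)\le\frac{\psi(s)}{\sn_k(s)}\sn_k(t)\le\frac{\psi(s)}{\sn_k(s)}\sn_k(s)=\psi(s),
\]
where the first inequality uses the decreasing ratio and $\sn_k(t)>0$, and the second uses $\psi(s)/\sn_k(s)\le 0$ together with $\sn_k(t)\ge\sn_k(s)$. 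Combined with $\psi(0)=0\ge\psi(s)$ for $s>0$, this shows $\psi$ is decreasing on $[0,l]$.

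The main subtlety is the rigidity. Suppose $\psi(t_1)=\psi(t_2)$ with $0\le t_1<t_2\le l$. If $t_1>0$, forcing equality in the two-step inequality above gives $\bigl(\psi(t_1)/\sn_k(t_1)\bigr)\bigl(\sn_k(t_1)-\sn_k(t_2)\bigr)=0$; since $\sn_k$ is strictly increasing on $(0,l]$, this forces $\psi(t_1)=0$, and then $\psi(t_2)=0$ as well. If $t_1=0$, then $\psi(t_2)=0$ is immediate. Either way $\psi(t_2)/\sn_k(t_2)=0=\lim_{t\to 0^+}\psi(t)/\sn_k(t)$, so the rigidity part of Lemma 2.2 applied to $\psi$ yields $\psi\equiv 0$ on $[0,t_2]$.
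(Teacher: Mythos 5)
Your proposal is correct and follows essentially the same route as the paper: the first part is read off directly from (2.7), (2.13) is obtained by applying Lemma 2.2 to $\psi$ (after noting $\psi''+k\psi\le 0$ in the support sense and $\psi(0)=\psi'_+(0)=0$) and then using that $\sn_k$ is positive and strictly increasing on $(0,l]$ when $l\le\pi/(2\sqrt k)$, and the rigidity is extracted by forcing equality in the two-step chain. The paper compresses this to the single inequality $\psi(t_1)\ge\frac{\sn_k(t_1)}{\sn_k(t_2)}\psi(t_2)$ and leaves the equality analysis implicit, but the content is the same; your version just spells out the equality cases and the role of $\psi\le 0$.
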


To (2.13), the condition `$l\leq\frac{\pi}{2\sqrt k}$' for $k>0$ is crucial. As a counterexample for $k=1$ and $l=\pi$, one can consider $f(t)\triangleq \frac12\sin(2t)$ on $[0,\frac\pi2]$  and $\cos t$ on $[\frac\pi2,\pi]$. We should point out that
Corollary 2.5 can be seen just by analyzing the difference function $f(t)-f'_+(0)\text{\rm sn}_k(t)$ without involving the quotient function in (2.7); and the first part of the corollary (the conclusions before (2.13)) is known to experts (cf. [PP], \cite{Pe}, [GP]).

\begin{proof}
Note that the first part (before (2.13)) of the corollary follows from (2.7) in Lemma 2.2 directly. Then, consider $\psi(t)\triangleq f(t)-f'_+(0)\text{\rm sn}_k(t)\leq 0$,
which satisfies $\psi''(t)+k\psi(t)\leq 0$ in the support sense on $(0,l)$ (with $\psi(0)=\psi'_+(0)=0$).
According to (2.7), for $0<t_1<t_2$, we have that
$$\frac{\psi(t_1)}{\text{\rm sn}_k(t_1)}\geq\frac{\psi(t_2)}{\text{\rm sn}_k(t_2)}, \text{ i.e. }
\psi(t_1)\geq\frac{\text{\rm sn}_k(t_1)}{\text{\rm sn}_k(t_2)}\psi(t_2).$$
This completes the proof because $\text{\rm sn}_k(t)$ is strictly increasing on
$[0,l]$ with $l\leq\frac{\pi}{2\sqrt k}$ for $k>0$.
\end{proof}

In the proof above, if $k\leq 0$, then `$\psi''(t)+k\psi(t)\leq 0$' implies $\psi''(t)\leq 0$ because $\psi(t)\leq 0$; so $\psi(t)$ is concave on $[0,l]$ with $\psi(0)=\psi'_+(0)=0$, and  the conclusion about $\psi(t)$ in Corollary 2.5 follows.

\begin{coro}\label{coro2.6} Let $f$ be the function in Lemma 2.2. Then the followings hold:

\noindent{\rm (1)} If $f(l)=0$, where $l<\frac{\pi}{\sqrt k}$ if $k>0$, then $f(t)\geq 0$ for all $t\in[0,l]$.

\noindent{\rm (2)} If $f(t_0)=f'_+(0)\text{\rm sn}_k(t_0)$ for some $t_0\in[0,l]$, then $f(t)=f'_+(0)\text{\rm sn}_k(t)$ on $[0,t_0]$.

\noindent{\rm (3)} If $k>0$ and $f(t_0)=0$ with $t_0<\frac{\pi}{\sqrt k}$, then either $f(t)|_{[0,t_0]}\equiv0$,
or $f(t)|_{(0,t_0)}>0$ and the maximum of $f(t)|_{(0,t_0)}$ is achieved at $\bar t\leq\frac{\pi}{2\sqrt k}$, and if $\bar t=\frac{\pi}{2\sqrt k}$ then $f(t)=f'_+(0)\text{\rm sn}_k(t)$ on $[0,\bar t]$.

\noindent{\rm (4)} If $k>0$ and $f'_+(0)<0$, then $f(t)$ is strictly decreasing on $[0,\frac{\pi}{2\sqrt k}]$.
\end{coro}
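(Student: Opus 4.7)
The plan is to derive all four assertions from two facts already in hand: the non-increase of $g(t)\triangleq f(t)/\sn_k(t)$ with $g(0^+)=f'_+(0)$ given by Lemma 2.2 (together with its rigidity), and the fact of Corollary 2.5 that $\psi(t)\triangleq f(t)-f'_+(0)\sn_k(t)$ is non-increasing on $[0,\frac{\pi}{2\sqrt k}]$.

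Parts (1) and (2) are immediate. For (1), $g(l)=0$ combined with $g$ non-increasing on $(0,l]$ forces $g\geq 0$ there, hence $f=g\sn_k\geq 0$. Part (2) is literally the rigidity clause of the first sentence of Corollary 2.5.

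For part (3), I first use (1) to conclude $f\geq 0$ on $[0,t_0]$. If $f(t_1)=0$ at some interior point $t_1\in(0,t_0)$, then $g(t_1)=g(t_0)=0$, and the rigidity of Lemma 2.2 gives $f\equiv f'_+(0)\sn_k$ on $(0,t_0]$; evaluation at $t_0$ yields $f'_+(0)=0$, so $f\equiv 0$. Otherwise $f>0$ on $(0,t_0)$, and since $f(0)=f(t_0)=0$ the maximum is attained at some $\bar t\in(0,t_0)$. If one had $\bar t>\frac{\pi}{2\sqrt k}$, then $\sn_k(\bar t)<\sn_k(\frac{\pi}{2\sqrt k})=\frac{1}{\sqrt k}$ while $g(\bar t)\leq g(\frac{\pi}{2\sqrt k})$ would give $f(\bar t)<f(\frac{\pi}{2\sqrt k})$, contradicting maximality; hence $\bar t\leq\frac{\pi}{2\sqrt k}$. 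The main obstacle is the rigidity when $\bar t=\frac{\pi}{2\sqrt k}$. To handle it I will introduce the Wronskian-type function $W(t)\triangleq\sn_k(t)f'_+(t)-\sn_k'(t)f(t)$, well-defined since $f'_+$ exists by Remark 2.4. A direct Dini computation using $\sn_k''=-k\sn_k$, the inequalities (2.11), and the comparison $f'_+\leq f'_-$ of (2.12) against $\sn_k'\geq 0$ on $[0,\bar t]$, yields $D^\pm W\leq 0$ there, so by Lemma 2.3 the function $W$ is non-increasing. Now $W(0)=0$, and using $\sn_k'(\bar t)=0$ together with $\lim_{t\to\bar t^-}f'_+(t)=f'_-(\bar t)$ from (2.9), one has $W(\bar t^-)=f'_-(\bar t)/\sqrt k$. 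Monotonicity forces $f'_-(\bar t)\leq 0$, while the maximality condition gives $f'_-(\bar t)\geq 0$; hence $f'_-(\bar t)=0$ and $W(\bar t^-)=0$. A non-increasing function pinched between $W(0)=0$ and $W(\bar t^-)=0$ must vanish identically on $[0,\bar t)$. The identity $W\equiv 0$ reads as the first-order linear equation $\sn_k f'_+=\sn_k' f$ on $(0,\bar t)$, whose right-hand side $f\sn_k'/\sn_k$ is continuous, so $f'_+$ is continuous and $f$ is $C^1$ there; integration combined with the initial condition $f(t)/\sn_k(t)\to f'_+(0)$ as $t\to 0^+$ gives $f=f'_+(0)\sn_k$ on $[0,\bar t]$.

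Part (4) is a short consequence of (2.13): on $[0,\frac{\pi}{2\sqrt k}]$ the function $\psi$ is non-increasing, while $f'_+(0)\sn_k(t)$ is strictly decreasing, being the product of the negative constant $f'_+(0)$ with the strictly increasing $\sn_k$. Therefore $f=\psi+f'_+(0)\sn_k$ is the sum of a non-increasing and a strictly decreasing function, so $f$ is strictly decreasing on $[0,\frac{\pi}{2\sqrt k}]$.
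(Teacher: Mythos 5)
Your treatment of (1), (3), and (4) is sound, and in (3) and (4) you take routes that differ from the paper's. For the rigidity at $\bar t=\frac{\pi}{2\sqrt k}$ in (3), the paper reflects $f$ across $\bar t$, sets $\bar f(t)=f(t)$ on $[0,\bar t]$ and $\bar f(t)=f(\frac{\pi}{\sqrt k}-t)$ on $[\bar t,\frac{\pi}{\sqrt k}]$, observes that $\bar f$ still lies in the class of Lemma 2.2 because $f(\bar t)$ is the maximum, and then applies (2.7) to $\bar f$ on all of $(0,\frac{\pi}{\sqrt k})$ to force $\bar f/\sn_k$ to be constant. You instead re-establish the monotonicity of the Wronskian $W(t)=\sn_k(t)f'_+(t)-\sn_k'(t)f(t)$ (which is precisely the function $h$ appearing inside the paper's proof of Lemma 2.2), compute its left limit at $\bar t$ using $\sn_k'(\bar t)=0$ together with the first-derivative condition at an interior maximum, and squeeze $W\equiv 0$ on $[0,\bar t)$. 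Both are valid; the paper's reflection trick is shorter and stays entirely at the level of (2.7), while yours re-opens the mechanism of Lemma 2.2 but is self-contained. Your (4) via (2.13) of Corollary 2.5 rather than via (2.7) directly is equally quick and correct.

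The point to flag is (2). You dispose of it by citing ``the rigidity clause of the first sentence of Corollary 2.5.'' Formally that clause is stated for $t_0\in(0,l]$, so it would seem to cover you. But when $k>0$ and $l=\frac{\pi}{\sqrt k}$, which Lemma 2.2 allows, the quotient $f/\sn_k$ is undefined at $t_0=l$, and the monotonicity-and-rigidity statement (2.7) — the only tool the paper invokes for the first part of Corollary 2.5 — cannot be applied with $t_2=l$. The paper is explicit that this is a genuine extra case: its proof of (2) says it ``is an immediate corollary of (2.7) except for the case where $k>0$ and $t_0=\frac{\pi}{\sqrt k}$,'' and then handles that case by setting $g(t)\triangleq f(\frac{\pi}{\sqrt k}-t)$, applying (1) to $f$ and $g$ simultaneously to show $f$ has a fixed sign on $[0,\frac{\pi}{\sqrt k}]$, and then applying (2.7) to both $f$ and $g$ (noting $\sn_k(\frac{\pi}{\sqrt k}-t)=\sn_k(t)$) to force $f/\sn_k$ to be simultaneously non-increasing and non-decreasing, hence constant. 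Your reduction either silently relies on Corollary 2.5 asserting more than its one-line proof actually delivers, or leaves the edge case $t_0=l=\frac{\pi}{\sqrt k}$ unaddressed; you should give the reflection argument (or an equivalent) for that case.
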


In Corollary 2.6, conclusions (1), (2) except for the case
`$k>0$ and $t_0=\frac{\pi}{\sqrt k}$', and (3) except for the rigidity part for $\bar t=\frac{\pi}{2\sqrt k}$
can be obtained by analyzing the function $f(t)-f'_+(0)\text{\rm sn}_k(t)$
(ref. [HSW] for (1), [PP] for (2) where $t_0<\frac{\pi}{\sqrt k}$ if $k>0$, [GP] for the conclusion `$\bar t\leq\frac{\pi}{2\sqrt k}$' in (3)).

\begin{proof} (1) is an immediate corollary of (2.7) in Lemma 2.2.

(2) is an immediate corollary of (2.7) except for the case where
$k>0$ and $t_0=\frac{\pi}{\sqrt k}$. In the exceptional case, consider $g(t)\triangleq f(\frac{\pi}{\sqrt k}-t)$, which also
satisfies $g''(t)+kg(t)\leq 0$  in the support sense for $t\in (0,l)$. By applying (1) to $f(t)$ and $g(t)$ simultaneously, we can conclude that
either $f(t)\geq 0$ on $[0,\frac{\pi}{\sqrt k}]$, or $f(t)\leq 0$ on $[0,\frac{\pi}{\sqrt k}]$. Then by applying (2.7) to $f(t)$ and $g(t)$
simultaneously, one can see that $f(t)=f'_+(0)\text{\rm sn}_k(t)$ for all $t\in[0,\frac{\pi}{\sqrt k}]$.

For (3), note that $f(t)\geq 0$ on $(0,t_0)$ (by (1)). So, on $(0,t_0)$, $f''(t)\leq 0$ in the support sense, i.e. $f(t)$ is concave (see Remark 2.4).
Thus, $f(t)|_{[0,t_0]}\equiv0$, or $f(t)|_{(0,t_0)}>0$ with $f'_+(0)>0$. In the latter case, assume that $f(\bar t)$
is the maximum of $f(t)|_{[0,t_0]}$. (2.7) guarantees $\bar t\leq\frac{\pi}{2\sqrt k}$ because
$\text{\rm sn}_k(t)$ is strictly decreasing on $[\frac{\pi}{2\sqrt k},\frac{\pi}{\sqrt k}]$. If $\bar t=\frac{\pi}{2\sqrt k}$,
set $\bar f(t)\triangleq f(t)$ on $[0,\frac{\pi}{2\sqrt k}]$ and $f(\frac{\pi}{\sqrt k}-t)$ on
$[\frac{\pi}{2\sqrt k}, \frac{\pi}{\sqrt k}]$.
Since $f(\frac{\pi}{2\sqrt k})$ is the maximum of $f(t)|_{[0,t_0]}$, $\bar f(t)$ also satisfies $\bar f''(t)+k\bar f(t)\leq 0$  in the support sense for all $t\in (0,\frac{\pi}{\sqrt k})$.
Then, by (2.7) for $\bar f$, it has to hold that $\bar f(t)=f'_+(0)\text{\rm sn}_k(t)$ on $t\in [0,\bar t]$.

(4) follows from (2.7) directly (note that $\text{\rm sn}_k(t)$ is strictly increasing on $[0,\frac{\pi}{2\sqrt k}]$).
\end{proof}

\begin{coro}\label{coro2.7} Let $f$ be the function in Lemma 2.2. Suppose that $f'_+(0)=1$ and $f(t)|_{(0,l)}>0$, and
for a given positive integer $m$, $\int_0^{x}f^m(t)dt=\int_0^{r}{\text{\rm sn}_k^m(t)}dt$ with $x,r\in(0,l)$.
If $k\geq 0$ or $f(t)=\text{\rm sn}_{\bar k}(t)$ with $\bar k>k$, then both $\frac{f(x)}{\text{\rm sn}_k(r)}$ and $\frac{r}{x}$ are less than or equal to $1$ and decreasing with respect to $r$.
\end{coro}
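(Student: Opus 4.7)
The strategy is to combine the pointwise bounds of Lemma~2.1 with a monotonicity argument for the implicit function $x(r)$ defined by $\int_0^{x(r)} f^m=\int_0^r \sn_k^m$. Since Lemma~2.2 applied to $f$ gives $f/\sn_k$ decreasing on $(0,l)$ with limit $f'_+(0)=1$, the hypotheses of Lemma~2.1 are met. Then (2.1.1) yields $x\geq r$ (hence $r/x\leq 1$) and that $x(r)$ is continuous, strictly increasing, and right-differentiable with
\[
f^m(x(r))\,x'_+(r) = \sn_k^m(r);
\]
(2.1.2) yields $f(x)\leq \sn_k(r)$ (hence $f(x)/\sn_k(r)\leq 1$). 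Substituting the displayed identity into the right derivative in $r$ of each ratio (which exists because $f'_+$ exists on $[0,l)$, cf.\ Remark~2.4), the monotonicity of $r/x$ becomes
\[
x\,f^m(x) \leq r\,\sn_k^m(r), \qquad (*)
\]
and that of $f(x)/\sn_k(r)$ becomes
\[
f'_+(x)\,\sn_k^{m+1}(r) \leq f^{m+1}(x)\,\sn_k'(r). \qquad (**)
\]

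For $(*)$, set $H(x) := r\,\sn_k^m(r) - x\,f^m(x)$ with $r=r(x)$, so $H(0)=0$. Using the identity for $x'_+$, a short computation gives
\[
H'(x) = m\,f^m(x)\,\Bigl(b(r) - \frac{x\,f'_+(x)}{f(x)}\Bigr), \qquad b(t) := \frac{t\,\sn_k'(t)}{\sn_k(t)}.
\]
The decreasing property of $f/\sn_k$ yields $f'_+/f\leq \sn_k'/\sn_k$, so $x\,f'_+(x)/f(x)\leq b(x)$; hence $H'\geq 0$ will follow from $b(x)\leq b(r)$. In Case A ($k\geq 0$), $b$ is non-increasing on $(0,l)$ ($b\equiv 1$ for $k=0$; $b(t)=\sqrt{k}\,t\cot\sqrt{k}\,t$ is the classical decreasing function $\theta\cot\theta$ in rescaled variables when $k>0$), so $b(x)\leq b(r)$ because $x\geq r$. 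In Case B ($f=\sn_{\bar k}$, $\bar k>k$), $x\,f'_+(x)/f(x)=b_{\bar k}(x)$, and the needed bound $b_{\bar k}(x)\leq b_k(r)$ follows from easy sign considerations whenever $k\geq 0$ or $\bar k\geq 0$; in the remaining subcase $k<\bar k<0$, the rescaling $u=\sqrt{|k'|}\,t$ converts the integral identity into $\int_0^{\sqrt{|\bar k|}\,x}\sinh^m = (|\bar k|/|k|)^{(m+1)/2}\int_0^{\sqrt{|k|}\,r}\sinh^m$, which forces $\sqrt{|\bar k|}\,x\leq \sqrt{|k|}\,r$ and hence $b_{\bar k}(x)\leq b_k(r)$ by monotonicity of $\theta\coth\theta$.

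For $(**)$, set $h(x) := f^{m+1}(x)\,\sn_k'(r) - \sn_k^{m+1}(r)\,f'_+(x)$, so $h(0)=0$. Differentiation using $\sn_k''=-k\sn_k$ together with the identity for $x'_+$ cancels the cross-terms to give
\[
h'(x) = -\sn_k^{m+1}(r)\,f''(x) - \frac{k\,f^{2m+1}(x)}{\sn_k^{m-1}(r)}.
\]
In Case A, $f''\leq -kf$ (in the support sense) and $f(x)\leq \sn_k(r)$ give
\[
h'(x) \geq \frac{k\,f(x)\,(\sn_k^{2m}(r) - f^{2m}(x))}{\sn_k^{m-1}(r)} \geq 0.
\]
In Case B, substituting $f''=-\bar k f$ reduces $h'\geq 0$ to $\bar k\,\sn_k^{2m}(r)\geq k\,\sn_{\bar k}^{2m}(x)$, which is immediate from $f(x)\leq \sn_k(r)$ whenever $k$ or $\bar k$ is non-negative, and reduces once more to the rescaled $\sinh$ comparison above when $k<\bar k<0$. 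Combined with $h(0)=0$, this yields $h\geq 0$, i.e., $(**)$.

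The main obstacle is precisely the subcase $k<\bar k<0$ in both $(*)$ and $(**)$: the single-variable monotonicity of $b$ in $t$ now points the wrong way, and the naive sign bounds used in Case A fail. The rescaling device, which turns the integral identity into the direct comparison $\sqrt{|\bar k|}\,x\leq \sqrt{|k|}\,r$ between suitably normalized arguments, is the crucial ingredient there, and it is precisely where the hypothesis $\bar k>k$ enters in an essential way.
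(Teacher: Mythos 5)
Your framework is close to the paper's: both work with the implicit function $x(r)$, use $f^m(x)\,x'_+(r)=\sn_k^m(r)$, and both ultimately hinge on the nonnegativity of $f^{m+1}(x)\sn_k'(r)-\sn_k^{m+1}(r)f'_+(x)$ (your $(**)$). Your handling of $r/x$ via $(*)$ and the function $b(t)=t\sn_k'(t)/\sn_k(t)$ is a nice, more elementary shortcut, and it is correct for $k\ge0$ and for the $\bar k>k$ cases you list, since $\theta\cot\theta$ is decreasing and $\theta\coth\theta$ is increasing. However, your argument for $(**)$ has a genuine gap in the subcase $k<\bar k<0$.

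For $(**)$ with $f=\sn_{\bar k}$ and $k<\bar k<0$, the needed inequality $\bar k\,\sn_k^{2m}(r)\ge k\,\sn_{\bar k}^{2m}(x)$ rearranges (both sides negative) to the \emph{lower} bound
$\sn_{\bar k}^m(x)\ge\sqrt{|\bar k|/|k|}\,\sn_k^m(r)$,
which is the paper's inequality (2.14). But the rescaled comparison you invoke, $\sqrt{|\bar k|}\,x\le\sqrt{|k|}\,r$, gives only the \emph{upper} bound $\sn_{\bar k}^m(x)\le(|k|/|\bar k|)^{m/2}\sn_k^m(r)$; since $|\bar k|<|k|$ this does not imply the lower bound, and the two are not contradictory, so nothing follows. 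The paper obtains (2.14) through a separate integral-comparison argument (comparing inverse functions of $\left(1/\sqrt{|\bar k|}\right)^{m+1}\sinh^m(t)$ and $\sinh^m(t)$, by contradiction), not by the rescaling identity alone. That argument is exactly what is missing from your sketch, and it is where the hypothesis $\bar k>k$ does its real work for $(**)$. A secondary issue, for Case A of $(**)$: writing $h'(x)=-\sn_k^{m+1}(r)f''(x)-\cdots$ presumes a classical second derivative; $f$ is only assumed to satisfy $f''+kf\le0$ in the support sense, so $f'_+$ may have downward jumps ($f'_-(x)>f'_+(x)$). The paper replaces $h'$ by the lower Dini derivatives $D_\pm$ in $r$ and uses (2.11), (2.12) (in particular $D^-f'_+(x)=-\infty$ at a jump) to push the argument through; your computation needs the same treatment to be rigorous, though this part is routine to repair. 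Note also that your $(*)$ argument, being only first-order in $f$, does not share this problem: checking $D_\pm H\ge0$ needs only $f'_\pm(x)/f(x)\le\sn_k'(x)/\sn_k(x)$, which follows from the monotonicity of $f/\sn_k$.
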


When $k$ is negative and $f(t)\neq\text{\rm sn}_{\bar k}(t)$, the conclusion of Corollary 2.7 might not be true. As a counterexample for $k=-1$ and $m=1$, we can consider the function
$$f(t)=\sinh t+\int_0^t(\cos x-1)\sinh(t-x)dx,\ t\in[0,5].$$

\begin{proof} Note that $f$ satisfies the conditions of Lemma 2.1, so $\frac{f(x)}{\text{\rm sn}_k(r)}\leq 1$ and $\frac{r}{x}\leq1$ by (2.1.1) and (2.1.2). And here $f$ is in addition continuous, so similar to (2.6) we have that $$f^m(x)x'(r)=\text{\rm sn}_k^m(r)$$
and thus $x'(r)\geq1$ is also continuous with respect to $r$.
It is easy to check that
$$\left(\frac{f^m(x)}{\text{\rm sn}_k^m(r)}\right)'_+=-\frac{(x')'_+(r)}{(x'(r))^2},$$
and note that
$$\left(\frac{r}{x}\right)'=\frac{x-rx'}{x^2} \text{ with }  \lim\limits_{r\to 0^+}(x-rx')=0  \text{ and }  \left(x-rx'\right)'_+=-r(x')'_+.$$
It then suffices to show that $(x')'_+(r)\geq 0$ for $r>0$. Note that
\begin{align*} (x')'_+(r)=&\frac{mf^m(x)\text{\rm sn}_k^{m-1}(r)\text{\rm sn}_k'(r)-m\text{\rm sn}_k^m(r)f^{m-1}(x)f'_+(x)x'(r)}{f^{2m}(x)}\\
=&\frac{m\text{\rm sn}_k^{m-1}(r)}{f^{2m+1}(x)}\left(f^{m+1}(x)\text{\rm sn}_k'(r)-\text{\rm sn}_k^{m+1}(r)f'_+(x)\right).
\end{align*}
 Then by Lemma 2.3, it suffices to show that
$$D_\pm\left(f^{m+1}(x)\text{\rm sn}_k'(r)-\text{\rm sn}_k^{m+1}(r)f'_+(x)\right)\geq 0$$
with respect to $r$. Due to the similarity, we consider only the cases where $k=0,1,-1$.

$\bullet$\ \ $k=0$: In this case, $D^+f'_+\leq 0$ and $D^-f'_+\leq 0$ by (2.8), then
\begin{align*} D_+\left(f^{m+1}(x)-r^{m+1}f'_+(x)\right)=-r^{m+1}D^+f'_+(x)x'(r)\geq0,
\end{align*}
and by taking into account (2.9) we have that
\begin{align*} D_-\left(f^{m+1}(x)-r^{m+1}f'_+(x)\right)=(m+1)r^{m}\left(f'_-(x)-f'_+(x)\right)-r^{m+1}D^-f'_+(x)x'(r)\geq0.
\end{align*}

$\bullet$\ \ $k=1$: In this case, $D^+f'_+\leq -f$ and $D^-f'_+\leq -f$ by (2.11), then
\begin{align*} &D_+\left(f^{m+1}(x)\cos r-\sin^{m+1}rf'_+(x)\right)\\
=&-f^{m+1}(x)\sin r-\sin^{m+1}rD^+f'_+(x)x'(r)\\
\geq & -f^{m+1}(x)\sin r+\sin^{m+1}rf(x)x'(r)\\
\geq & -f^{m+1}(x)\sin r+\sin^{m+1}rf(x)\quad (\text{by } (2.1.4))\\
\geq & 0 \quad (\text{by } (2.1.2)).
\end{align*}
And similarly,
\begin{align*} &D_-\left(f^{m+1}(x)\cos r-\sin^{m+1}rf'_+(x)\right)\\
=&(m+1)\sin^{m}r\cos r\left(f'_-(x)-f'_+(x)\right)-f^{m+1}(x)\sin r-\sin^{m+1}rD^-f'_+(x)x'(r)\geq 0
\end{align*}
(note that $D^-f'_+(x)=-\infty$ when $f'_-(x)>f'_+(x)$, see (2.10)).

$\bullet$\ \ $k=-1$: In this case, our assumption is that $f(t)=\text{\rm sn}_{\bar k}(t)$ with $\bar k>-1$, then
$$D_\pm\left(f^{m+1}(x)\text{\rm cosh}(r)-\text{\rm sinh}^{m+1}(r)f'_+(x)\right)=\text{\rm sinh}(r)\left(\text{\rm sn}_{\bar k}^{m+1}(x)-\text{\rm sinh}^m(r)\text{\rm sn}_{\bar k}''(x)x'(r)\right).$$
It is clearly positive when $\bar{k}\geq0$ because $\text{\rm sn}_{\bar k}''(x)\leq0$ (note that
$x<\frac{\pi}{\sqrt{\bar k}}$ when $\bar k>0$ because $f(t)|_{(0,l)}>0$).
When  $\bar{k}\in(-1,0)$, note that $\text{\rm sn}_{\bar k}''(x)=(-\bar k)\text{\rm sn}_{\bar k}(x)$ and $x'(r)=\frac{\text{\rm sinh}^m(r)}{\text{\rm sn}_{\bar k}^m(x)}$, so it suffices to show that
$$\text{\rm sn}_{\bar k}^m(x)>\sqrt{-\bar k}\sinh^m(r). \eqno{(2.14)}$$
Note that
\begin{align*}
    \int_{0}^{r} \text{\rm sinh}^m(t) \,dt&=\int_{0}^{x} \text{\rm sn}_{\bar k}^m(t) \,dt\\
    &=\int_{0}^{x} \left(\frac{1}{\sqrt{-\bar{k}}}\right)^{m}\text{\rm sinh}^m(\sqrt{-\bar{k}}t) \,dt\\
    &=\int_{0}^{\sqrt{-\bar{k}}x} \left(\frac{1}{\sqrt{-\bar{k}}}\right)^{m+1}\text{\rm sinh}^m(t) \,dt,
\end{align*}
so we can get that $\sqrt{-\bar{k}}x<r$ because $0<-\bar{k}<1$. Then we can apply the method used in proving (2.1)
to derive that $\left(\frac{1}{\sqrt{-\bar{k}}}\right)^{m+1}\text{\rm sinh}^m(\sqrt{-\bar{k}}x)>\text{\rm sinh}^m(r)$, i.e. (2.14) holds.
If it is not true, then there is $r_1\in (\sqrt{-\bar{k}}x,r]$ such that $\left(\frac{1}{\sqrt{-\bar{k}}}\right)^{m+1}\text{\rm sinh}^m(\sqrt{-\bar{k}}x)=\text{\rm sinh}^m(r_1)$. And for any $t_1\in [0,\sqrt{-\bar{k}}x]$, there is a unique $t_2\in (t_1,r_1]$
such that $\left(\frac{1}{\sqrt{-\bar{k}}}\right)^{m+1}\text{\rm sinh}^m(t_1)=\text{\rm sinh}^m(t_2)$.
Note that
\begin{align*}
\left.\left[\left(\frac{1}{\sqrt{-\bar{k}}}\right)^{m+1}\text{\rm sinh}^m(t)\right]'\right|_{t_1}
&=m\left(\frac{1}{\sqrt{-\bar{k}}}\right)^{m+1}\text{\rm sinh}^{m-1}(t_1)\sqrt{1+\text{\rm sinh}^2(t_1)}\\
&=m\text{\rm sinh}^{m}(t_2)\sqrt{1+\frac1{\text{\rm sinh}^2(t_1)}},\\
\left(\text{\rm sinh}^m(t)\right)^{'}|_{t_2}&=m\text{\rm sinh}^{m}(t_2)\sqrt{1+\frac1{\text{\rm sinh}^2(t_2)}},
\end{align*}
so $\left.\left[\left(\frac{1}{\sqrt{-\bar{k}}}\right)^{m+1}\text{\rm sinh}^m(t)\right]'\right|_{t_1}>(\text{\rm sinh}^m(t))^{'}|_{t_2}.$
Then by considering the inverse functions, defined on $[0,\text{\rm sinh}^m(r_1)]$, of $\left.\left(\frac{1}{\sqrt{-\bar{k}}}\right)^{m+1}\text{\rm sinh}^m(t)\right|_{[0,\sqrt{-\bar{k}}x]}$ and $\text{\rm sinh}^m(t)|_{[0,r_1]}$,
we can conclude that
$$\int_{0}^{\sqrt{-\bar{k}}x} \left(\frac{1}{\sqrt{-\bar{k}}}\right)^{m+1}\text{\rm sinh}^m(t) \,dt< \int_{0}^{r_1} \text{\rm sinh}^m(t) \,dt\leq
\int_{0}^{r} \text{\rm sinh}^m(t),$$
a contradiction. That is, (2.14) has been verified, so the proof is completed.
\end{proof}


\section{Proofs of Theorems A and C}

Let $M$ be the manifold in Theorem A, and $(\rho, \theta)$ be the polar coordinates of $T_pM$, where $\theta$ is the canonical coordinates of the unit sphere $\Bbb S^{n-1}$.
Recall that $(\exp_p^{-1})^*g=d\rho^2+G(\rho,\theta)d\theta^2$ on the interior part of the segment domain of $p$, where $g$ is the metric of $M$ and $G(\rho,\theta)$ denotes $(n-1)\times(n-1)$ matrix.
Set $\lambda(\rho,\theta)\triangleq\sqrt{\det(G(\rho,\theta))}$. A key observation is that ``Ric$_M\geq (n-1)k$'' guarantees that  ([Pe], [WSY])
$$\frac{\lambda(\rho,\theta)}{\text{\rm sn}_k^{n-1}(\rho)}, \text{ whose limit is equal to $1$ as $\rho\to 0^+$, is decreasing with respect to } \rho. \eqno{(3.1)}$$
Furthermore, we can define
$\bar\lambda(\rho,\theta)\triangleq\begin{cases}\lambda(\rho,\theta), & \rho<\rho_\theta\\ 0, & \rho\geq\rho_\theta\end{cases}$ on the whole $T_pM$,
where $\rho_\theta$ satisfies that $\exp_p((\rho,\theta))|_{\rho\in[0,\rho_0]}$ is a segment for $\rho_0\leq\rho_\theta$, but not for $\rho_0>\rho_\theta$.
It follows that
$$\frac{\bar\lambda(\rho,\theta)}{\text{\rm sn}_k^{n-1}(\rho)}, \text{ whose limit is equal to $1$ as $\rho\to 0^+$, is still decreasing with respect to } \rho, \eqno{(3.2)}$$
where $\rho\leq\frac{\pi}{\sqrt{k}}$ if $k>0$. In the Lebesgue sense,
$$\text{Vol$(\partial B(p,\rho))\triangleq\int_{\Bbb S^{n-1}(1)} \bar\lambda(\rho,\theta)d\theta$,\ \
Vol$(B(p, r))=\int_0^r\int_{\Bbb S^{n-1}(1)} \bar\lambda(\rho,\theta)d\theta d\rho$}\ \footnote{From (3.2), it is
clear that Vol$(\partial B(p, \rho))\leq $Vol$(\partial B(\tilde p, \rho))$, and it is  not hard to see (1.1) in Theorem 1.1 (i.e. the relative volume comparison theorem, [Pe], [WSY]). }
$$  ([Pe], [WSY]). Note that $\bar\lambda(\rho,\theta)$ maybe is not continuous at $\rho_\theta$ with respect to $\rho$, nor is Vol$(\partial B(p,\rho))$, for example, at $\rho=\frac{\pi}{2}$ when $M$ is isometric to $\Bbb R\Bbb P^n$ with canonical metric.

\begin{proof}[Proof of Theorem A]\

Note that Vol$(\partial B(\tilde p,\rho))=v_1\cdot\text{\rm sn}_k^{n-1}(\rho)$, where $v_1$=Vol$(\Bbb S^{n-1}(1))$. Then by Lemma 2.1, it suffices to show that, with respect to $\rho$,
\vskip1mm
\noindent(3.3)\ \ Vol$(\partial B(p,\rho))$ is right continuous;
\vskip1mm
\noindent(3.4)\ \ Vol$'_+(\partial B(p,\rho))$ exists;
\vskip1mm
\noindent(3.5)\ \ $\frac{\text{Vol}(\partial B(p,\rho))}{\text{Vol}(\partial B(\tilde p,\rho))}, \text{ whose limit is equal to $1$ as $\rho\to 0^+$, is decreasing}$
\footnote{Plus (3.4), (3.5) implies that $\text{Vol$_+'(\partial B(p, \rho))\leq\frac{{\rm Vol}(\partial B(p, \rho))}{{\rm Vol}(\partial B(\tilde p, \rho))}$Vol$'(\partial B(\tilde p, \rho))\ (\leq$Vol$'(\partial B(\tilde p, \rho)) \text{ by Footnote 6})$}$.}.
\vskip1mm

Note that $\bar\lambda(\rho,\theta)$ is right continuous with respect to $\rho$, and $0\leq\bar\lambda(\rho,\theta)\leq \text{\rm sn}_k^{n-1}(\rho)$ by (3.2).
Then (3.3) is guaranteed by the Dominated Convergence Theorem. As for (3.4), note that when $\rho<\rho_\theta$,
$$\partial_\rho\lambda(\rho,\theta)=\triangle \rho\cdot\lambda(\rho,\theta)\eqno{(3.6)}$$
where $\triangle \rho$ is the Laplacian of the distance function $\rho$ to $p$ on $M$ (see the proof of Lemma 7.1.2  in [Pe]).
This implies that, for each $\rho$, there is a $C_\rho$ such that $|\bar\lambda'_+(\rho,\theta)|\leq C_\rho$
(note that on $\overline{B(p,\rho)}$ the sectional curvatures have both upper and lower bound). Then (3.4) is also guaranteed by the Dominated Convergence Theorem
and $$\text{Vol}'_+(\partial B(p,\rho))=\int_{\Bbb S^{n-1}(1)} \bar\lambda'_+(\rho,\theta)d\theta$$
(but Vol$'_-(\partial B(p,\rho))$ might not exist).

(3.5) is stated in [WSY] without proof, so for the completeness we will give a detailed proof for it.
Set $f(\rho)\triangleq\frac{\text{Vol}(\partial B(p,\rho))}{\text{Vol}(\partial B(\tilde p,\rho))}$,
whose limit is equal to $1$ as $\rho\to 0^+$ by (3.1). By Lemma 2.3, to see (3.5) it suffices to show that $D^{\pm}f(\rho)\leq0$ for all $\rho>0$. By a direct computation,
\begin{align*}
 D^{\pm}f(\rho) & =\varlimsup_{h \to 0^\pm} \frac{f(\rho+h)-f(\rho)}{h} \\
 & =\varlimsup_{h \to 0^\pm} \frac{1}{h}\left(\frac{\int_{\Bbb S^{n-1}(1)} \bar\lambda(\rho+h,\theta)d\theta}{v_1\cdot\text{\rm sn}_k^{n-1}(\rho+h)}-\frac{\int_{\Bbb S^{n-1}(1)} \bar\lambda(\rho,\theta)d\theta}{v_1\cdot\text{\rm sn}_k^{n-1}(\rho)}\right) \\
& =\varlimsup_{h \to 0^\pm} \frac{1}{v_1}\int_{\Bbb S^{n-1}(1)}\frac{1}{h}\left(\frac{ \bar\lambda(\rho+h,\theta)}{\text{\rm sn}_k^{n-1}(\rho+h)}-\frac{ \bar\lambda(\rho,\theta)}{\text{\rm sn}_k^{n-1}(\rho)}\right)d\theta.
 \end{align*}
Observe that, for each $\rho$ and $h\neq0$, $\frac{1}{h}\left(\frac{ \bar\lambda(\rho+h,\theta)}{\text{\rm sn}_k^{n-1}(\rho+h)}-\frac{ \bar\lambda(\rho,\theta)}{\text{\rm sn}_k^{n-1}(\rho)}\right)\leq 0$ by (3.2), and as a function with respect to $\theta$ it is measurable on $\Bbb S^{n-1}(1)$. It then follows from Fatou's lemma that
$$ D^{\pm}f(\rho)\leq \frac{1}{v_1}\int_{\Bbb S^{n-1}(1)}\varlimsup_{h \to 0^\pm}\frac{1}{h}\left(\frac{ \bar\lambda(\rho+h,\theta)}{\text{\rm sn}_k^{n-1}(\rho+h)}-\frac{ \bar\lambda(\rho,\theta)}{\text{\rm sn}_k^{n-1}(\rho)}\right)d\theta\leq 0;
$$
and thus (3.5) follows. This completes the proof of Theorem A.
\end{proof}

\begin{remark}\label{rem3.1} {\rm From (3.5), it is easy to
see that Vol$(\partial B(p,\rho))$ is differentiable almost everywhere (note that it is smooth when $\rho$ is less than the injective radius of $M$ at $p$). }
\end{remark}

Inspired by Corollary 2.7, a natural question is, in Theorem A, whether $\frac{\text{Vol}(\partial B(p,\bar r))}{\text{Vol}(\partial B(\tilde p, r))}$ and $\frac{r}{\bar r}$ are decreasing with respect to $r$. Theorem C just aims to answer it. For the purpose, we need more detailed properties of $\lambda(\rho,\theta)$ than in (3.1).
Note that (3.6) plus `$\triangle \rho\leq (n-1)\frac{\text{\rm sn}_k'(\rho)}{\text{\rm sn}_k(\rho)}$'
(the Laplace comparison theorem) implies
$\frac{\partial_\rho\lambda(\rho,\theta)}{\lambda(\rho,\theta)}\leq(n-1)\frac{\text{\rm sn}_k'(\rho)}{\text{\rm sn}_k(\rho)}$
which is equivalent to the monotonicity in (3.1),
while `$\triangle\rho\leq (n-1)\frac{\text{\rm sn}_k'(\rho)}{\text{\rm sn}_k(\rho)}$' is due to
$$\partial_\rho\triangle \rho+\frac{(\triangle \rho)^2}{n-1}\leq \partial_\rho\triangle \rho+|\text{Hess}(\rho)|^2=-\text{Ric} (\frac{\partial}{\partial\rho},\frac{\partial}{\partial\rho})\leq-(n-1)k\eqno{(3.7)}$$
(cf. Proposition 7.1.1 and the proof of Lemma 7.1.2  in [Pe]). And if the first equality in (3.7) holds, then, restricted to the $n-1$ dimensional space orthogonal to $\frac{\partial}{\partial \rho}$,
$$\text{Hess}(\rho)=\frac{\triangle \rho}{n-1}\cdot G(\rho,\theta)d\theta^2\ \text{(cf. the proof of Theorem 7.2.5 in [Pe])}.\eqno{(3.8)}$$
From (3.6) and (3.7), we can derive that $$\partial^2_\rho\sqrt[n-1]{\lambda(\rho,\theta)}=\frac{\sqrt[n-1]{\lambda(\rho,\theta)}}{n-1}
\left(\partial_\rho\triangle \rho+\frac{(\triangle \rho)^2}{n-1}\right)\leq-k\cdot\sqrt[n-1]{\lambda(\rho,\theta)},$$
i.e.,
$$\partial^2_\rho\sqrt[n-1]{\lambda(\rho,\theta)}+k\cdot\sqrt[n-1]{\lambda(\rho,\theta)}\leq 0,\eqno{(3.9)}$$
and the equality implies $\lambda(\rho,\theta)=\text{\rm sn}^{n-1}_k(\rho)$
\footnote{In the case where $\lambda(\rho,\theta)=\text{\rm sn}^{n-1}_k(\rho)$, $\triangle\rho=(n-1)\frac{\text{\rm sn}_k'(\rho)}{\text{\rm sn}_k(\rho)}$ by (3.6), and both of the two equalities in (3.7) hold; and so (3.8) can be rewritten as
$\text{Hess}(\rho)=\frac{\text{\rm sn}_k'(\rho)}{\text{\rm sn}_k(\rho)}\cdot g_\rho, \text{ where }g_\rho=G(\rho,\theta)d\theta^2$.
Then it can be checked that
$$\nabla_{\partial\rho}\text{Hess}(\rho)+\text{Hess}^2(\rho)=-k\cdot g_\rho \text{ with } \lim_{\rho\to 0}\text{Hess}(\rho)=0.$$
This implies $g=d\rho^2+G(\rho,\theta)d\theta^2=d\rho^2+ \text{\rm sn}^{n-1}_k(\rho)d\theta^2$
according to Brinkmann's Theorem  (Theorem 4.3.3  in [Pe]), and thus
$B(p,\rho)$ is isometric to $B(\tilde p, \rho)\subset\Bbb S^n_k$ (cf. the proof of the Maximal Diameter Theorem in [Pe]).
Note that, in Theorem 1.1, if {\rm Vol}$(B(p, \rho_0))=${\rm Vol}$(B(\tilde p, \rho_0))$, then $\lambda(\rho,\theta)=\text{\rm sn}^{n-1}_k(\rho)$ for all $\rho<\rho_0$ by (3.1),
and thus $B(p,\rho_0)$ is isometric to $B(\tilde p, \rho_0)\subset\Bbb S^n_k$ (this is not mentioned in [Pe] and [WSY]).}
(note that $\lim\limits_{\rho\to0^+}\frac{\lambda(\rho,\theta)}{\text{\rm sn}^{n-1}_k(\rho)}=1$).

We are now ready to prove Theorem C.

\begin{proof}[Proof of Theorem C]\

Note that condition (C1) plus (3.9) enables us to apply Corollary 2.7 to see (1.2) (note that (3.9) might not be true if $\lambda(\rho,\theta)$ is replaced with $\bar\lambda(\rho,\theta)$). And under condition (C2), we have $\lambda(\rho,\theta)=\text{\rm sn}^{n-1}_{\bar k}(\rho)$, so we can also apply Corollary 2.7 to see (1.2).
\end{proof}

\begin{remark}\label{rem3.2} {\rm Note that we cannot derive that
$$\partial^2_\rho\sqrt[n-1]{\int_{\Bbb S^{n-1}(1)}\lambda(\rho,\theta)d\theta}+k\cdot\sqrt[n-1]{\int_{\Bbb S^{n-1}(1)}\lambda(\rho,\theta)d\theta}\leq 0$$
from (3.9) except when $n=2$. This is why we consider only the case `$n=2$' in condition (C1). }
\end{remark}

\section{An application of Lemma 2.2}

As an application of Lemma 2.2, we will give a relative version of Toponogov's Theorem (see Remark 4.1 below). In a complete Riemannian manifold $M$ with sectional curvature $\sec_M\geq k$, let $[pq]$ be a minimal geodesic with $|pq|<\frac\pi{\sqrt k}$ for $k>0$ and $\gamma(t)|_{t\in[0,l]}$ be an arc-length parameterized geodesic with $\gamma(0)=q$ and $l<\frac\pi{\sqrt k}$ for $k>0$; and in
$\Bbb S^2_k$, and let $[\tilde p\tilde q]$ be a minimal geodesic with $|\tilde p\tilde q|=|pq|$ and $\tilde\gamma(t)|_{t\in[0,l]}$ be an arc-length parameterized geodesic with $\tilde\gamma(0)=\tilde q$. Recall that Toponogov's Theorem asserts that {\it if $\angle pq\gamma(l)=\angle\tilde p\tilde q\tilde\gamma(l)$, then $|p\gamma(t)|\leq|\tilde p\tilde\gamma(t)|$.}

By the second variation formula, it is well known that
$\varphi_k''(|\tilde p\tilde\gamma(t)|)+k\varphi_k(|\tilde p\tilde\gamma(t)|)=1$ for all $t\in (0,l)$, and $\varphi_k''(|p\gamma(t)|)+k\varphi_k(|p\gamma(t)|)\leq1$ in the support sense (note that $\gamma(t)|_{[0,l]}$ might contain the point of the cut locus of $p$) because $\sec_M\geq k$, where the function $\varphi_k$ is defined by
$$\varphi_k(\rho)=\begin{cases}\frac{1}{k}(1-\cos(\sqrt{k} \rho)), & k>0\\ \frac12\rho^2, & k=0\\ \frac{1}{-k}(\cosh(\sqrt{-k}\rho)-1), & k<0\end{cases}.$$
Hence, the function $f(t)\triangleq \varphi_k(|p\gamma(t)|)-\varphi_k(|\tilde p\tilde \gamma(t)|)$ satisfies
$$f''(t)+kf(t)\leq 0 \text{ in the support sense for $t\in (0,l)$  (cf. [Pe], [PP])}.\eqno{(4.1)}$$
And note that $f'_+(0)\leq 0$ by the first variation formula. Then by Lemma 2.2 (or Corollary 2.5), we get that
$f(t)\leq 0$ for all $t$, which is equivalent to $|p\gamma(t)|\leq|\tilde p\tilde\gamma(t)|$ (i.e. Toponogov's Theorem follows).

Let $\bar\gamma(t)|_{t\in[0,l]}$ be another arc-length parameterized geodesic with $\bar\gamma(0)=\tilde q$ in $\Bbb S^2_k$. From above, Toponogov's Theorem can be formulated as
$$\varphi_k(|p\gamma(t)|)-\varphi_k(|\tilde p\bar\gamma(t)|)\leq \varphi_k(|\tilde p\tilde \gamma(t)|)-\varphi_k(|\tilde p\bar \gamma(t)|).\eqno{(4.2)}$$
Note that $g(t)\triangleq \varphi_k(|p\gamma(t)|)-\varphi_k(|\tilde p\bar \gamma(t)|)$ also satisfies
$g''(t)+kg(t)\leq 0$ in the support sense for $t\in (0,l)$; and by the Law of Cosine,
$\varphi_k(|\tilde p\tilde \gamma(t)|)-\varphi_k(|\tilde p\bar\gamma(t)|)=(\cos\angle\tilde p\tilde q\bar\gamma(l)-\cos\angle pq\gamma(l))\sn_k(|pq|)\sn_k(t)$. Then by Lemma 2.2 plus the first variation formula, (4.2) (or say Toponogov's Theorem) has a relative version as follows.

\begin{remark}\label{rem4.1} {\rm In the situation of (4.2), if $\angle \tilde p\tilde q\bar\gamma(l)<\angle pq\gamma(l)$
(resp. $\angle \tilde p\tilde q\bar\gamma(l)>\angle pq\gamma(l)$), then
$$\frac{\varphi_k(|p\gamma(t)|)-\varphi_k(|\tilde p\bar\gamma(t)|)}{\varphi_k(|\tilde p\tilde \gamma(t)|)-\varphi_k(|\tilde p\bar \gamma(t)|)}
\text{ is decreasing (resp. increasing) with respect to $t$}$$
and has limit $\geq 1$ (resp. $\leq 1$) as $t\to 0^+$. Or uniformly,
$$\Phi(t)\triangleq\frac{\varphi_k(|p\gamma(t)|)-\varphi_k(|\tilde p\bar\gamma(t)|)}{\sn_k(t)}\text{ is decreasing with respect to $t$};$$
moreover, if $\Phi(t_1)=\Phi(t_2)$ for some $t_1,t_2\in (0,l]$ with $t_1<t_2$,
then $\Phi(t)\equiv\Phi(t_2)$  on $(0,t_2]$.}
\end{remark}

\begin{remark}\label{rem4.2} {\rm In the situation of (4.1), $f(t)$
satisfies $f'_+(0)\leq 0$. Then by Corollary 2.5 we have that
$$f(t) \text{ is decreasing  with respect to $t$},$$
where it needs $t\leq\frac{\pi}{2\sqrt k}$ if $k>0$; and if $f(t_1)=f(t_2)$ with $t_1<t_2$
and $t_2\leq\frac{\pi}{2\sqrt k}$ for $k>0$, then $f(t)\equiv0$ ($\Leftrightarrow\ |p\gamma(t)|=|\tilde p\tilde\gamma(t)|$) on $[0,t_2]$.}
\end{remark}

Compared to Remarks 4.1 and 4.2, one cannot expect the decreasing property of $\frac{\varphi_k(|p\gamma(t)|)}{\varphi_k(|\tilde p\tilde\gamma(t)|)}$.

\begin{remark}\label{rem5.3} {\rm The arguments in this section also apply to the case where $M$ is a complete and geodesic Alexandrov space with curvature $\geq k$.}
\end{remark}


\vskip6mm

\noindent School of Mathematical Sciences (and Lab. math. Com.
Sys.), Beijing Normal University, Beijing, 100875,
People's Republic of China

\noindent E-mail: suxiaole$@$bnu.edu.cn; 202231130006@mail.bnu.edu.cn; wyusheng$@$bnu.edu.cn

\end{document}